\DeclareFontFamily{OT1}{pzc}{}
\DeclareFontShape{OT1}{pzc}{m}{it}{<-> s * [0.900] pzcmi7t}{}
\DeclareMathAlphabet{\mathpzc}{OT1}{pzc}{m}{it}
\renewcommand{\bar}{\overline}
\newcommand{\mbf}{\mathbf}
\renewcommand{\mod}{\hspace*{-0.01cm}\mathrm{mod}}
\newcommand{\af}[1]{{\small\textsf{{#1}}}}
\DeclarePairedDelimiter\ceil{\lceil}{\rceil}
\let\oldnl\nl% Store \nl in \oldnl
\newcommand{\nonl}{\renewcommand{\nl}{\let\nl\oldnl}}
\newcommand{\beq}{\begin{equation}}
\newcommand{\eeq}{\end{equation}}
\newcommand{\beqa}{\begin{eqnarray}}
\newcommand{\eeqa}{\end{eqnarray}}
\newcommand{\beqan}{\begin{eqnarray*}}
\newcommand{\eeqan}{\end{eqnarray*}}
\newcommand{\Gfrak}{\mathbf{G}}
\newcommand{\Acal}{{\cal A}}
\newcommand{\Ecal}{{\cal E}}
\newcommand{\Hcal}{{\cal H}}
\newcommand{\Ncal}{{\cal N}}
\newcommand{\Ocal}{{\cal O}}
\newcommand{\Ucal}{{\cal U}}
\newcommand{\Vcal}{{\cal V}}
\newcounter{l1}
\newcounter{l2}
\newcounter{l3}
\newcommand{\bdotlist}{\begin{list}{$\bullet$}{}}
\newcommand{\bboxlist}{\begin{list}{$\Box$}{}}
\newcommand{\bbboxlist}{\begin{list}{\raisebox{.005in}{{\tiny
$\blacksquare$ \ \ }}}{}}
\newcommand{\bdashlist}{\begin{list}{$-$}{} }
\newcommand{\blist}{\begin{list}{}{} }
\newcommand{\barablist}{\begin{list}{\arabic{l1}}{\usecounter{l1}}}
\newcommand{\balphlist}{\begin{list}{(\alph{l2})}{\usecounter{l2}}}
\newcommand{\bAlphlist}{\begin{list}{\Alph{l2}.}{\usecounter{l2}}}
\newcommand{\bdiamlist}{\begin{list}{$\diamond$}{}}
\newcommand{\bromalist}{\begin{list}{(\roman{l3})}{\usecounter{l3}}}
\newtheorem{theorem}{Theorem}
\newtheorem{lemma}{Lemma}
\newtheorem{remark}{Remark}
\newtheorem{definition}{Definition}
\begin{document}

\title{A Private and Finite-Time Algorithm for Solving a Distributed \\ System of Linear Equations}

\author{Shripad Gade \and Ji Liu \and Nitin H. Vaidya% <-this % stops a space
\thanks{SG is with ECE Department at University of Illinois at Urbana-Champaign, JL is with Faculty of ECE Department at Stony Brook University, and NHV is with Faculty of CS Department at Georgetown University. e-mails: \texttt{gade3@illinois.edu, ji.liu@stonybrook.edu, nitin.vaidya@georgetown.edu}. SG was funded by Joan and Lalit Bahl Fellowship (UIUC) and Siebel Energy Institute Grant.}%
}

\maketitle

\begin{abstract}
This paper studies a system of linear equations, denoted as $Ax = b$, which is horizontally partitioned (rows in $A$ and $b$) and stored over a network of $m$ devices connected in a fixed directed graph. %
We design a fast distributed algorithm for solving such a partitioned system of linear equations, that additionally, protects the privacy of local data against an honest-but-curious adversary that corrupts at most $\tau$ nodes in the network. % 
First, we present TITAN, privaTe fInite Time Average coNsensus algorithm, for solving a general average consensus problem over directed graphs, while protecting statistical privacy of private local data against an honest-but-curious adversary.  %
Second, we propose a distributed linear system solver that involves each agent/devices computing an update based on local private data, followed by private aggregation using {TITAN}. %
Finally, we show convergence of our solver to the least squares solution in finite rounds along with statistical privacy of local linear equations against an honest-but-curious adversary provided the graph has weak vertex-connectivity of at least $\tau+1$.
We perform numerical experiments to validate our claims and compare our solution to the state-of-the-art methods by comparing computation, communication and memory costs.
\end{abstract}

%!TEX root = root.tex
\section{Introduction} \label{Sec:Introduction}

Consider a system of linear equations,
\begin{align}
Ax = b, \label{Eq:ProblemI}
\end{align}
where, $x \in \mathbb{R}^n$ is the $n-$dimensional solution to be learned, and $A \in \mathbb{R}^{p \times n}$, $b \in \mathbb{R}^{p \times 1}$ encode $p$ linear equations in $n-$variables. The system of linear equations is horizontally partitioned and stored over a network of $m$ devices. Each device $i \in \{1, \ldots, m\}$ has access to $p_i$ linear equations denoted by,
\begin{align}
A_i x = b_i, \label{Eq:ProblemLocal}
\end{align} 
where, $A_i \in \mathbb{R}^{p_i \times n}$ and $b_i \in \mathbb{R}^{p_i \times 1}$. For instance, in Fig.~\ref{fig:1} we show a network of $m=5$ nodes and horizontal partitioning of $p=15$ linear equations in $n=5$ variables using colored blocks. In this paper, we consider an honest-but-curious adversary that corrupts at most $\tau$ devices/nodes in the network and exploits observed information to infer private data. We are interested in designing a fast, distributed algorithm that solves problem~\eqref{Eq:ProblemI}, while, protecting privacy of local information $(A_i,b_i)$ against such an honest-but-curious adversary. 

Solving a system of linear algebraic equations is a fundamental problem that is central to analysis of electrical networks, sensor networking, supply chain management and filtering \cite{xiao2005scheme,kar2012distributed,williams2017linear}. Several of these applications involve linear equations being stored at geographically separated devices/agents that are connected via a communication network. The geographic separation between agents along with communication constraints and unavailability of central servers necessitates design of distributed algorithms. Recently, several articles have proposed distributed algorithms for solving \eqref{Eq:ProblemI},  \cite{mou2015distributed,liu2017asynchronous,liu2017exponential,yang2020distributed,jakovetic2020distributed,wang2019solving,alaviani2018distributed} to name a few. %
In this work, we specifically focus on designing private methods that protect sensitive and private linear equations at each device/agent.
\begin{figure}[t]
    \centering
    \includegraphics[width=0.8\linewidth]{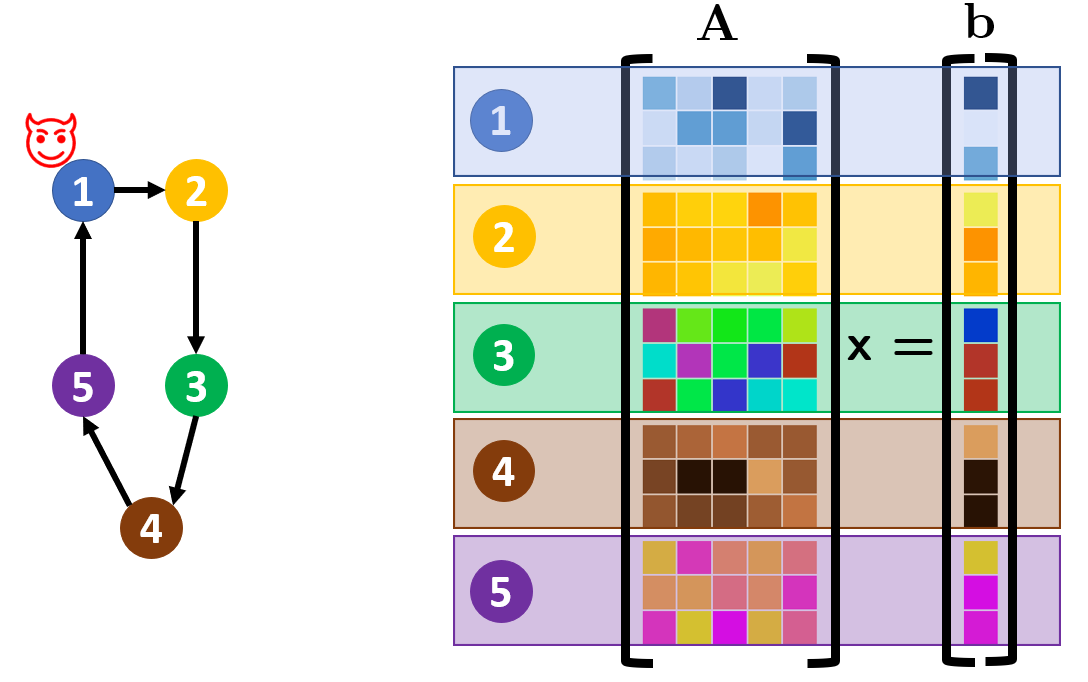}  
    \caption{Directed network with $m=5$ nodes and $1$ adversary. $A$ and $b$ are horizontally partitioned and stored at $m$ nodes. Local information is private to the nodes.}
    \label{fig:1}
\end{figure}

Literature has explored several approaches to solving a distributed system of linear equations. Authors in \cite{kar2012distributed} formulated the problem as a parameter estimation task. Consensus or gossip based distributed estimation algorithms are then used to solve \eqref{Eq:ProblemI}. Interleaved consensus and local projection based methods are explored in \cite{mou2015distributed,liu2017asynchronous,liu2017exponential}. These direct methods, involving feasilble iterates that move only along the null space of local coefficient matrix $A_i$, converge exponentially fast. One can also view solving \eqref{Eq:ProblemI} as a constrained consensus \cite{nedic2010constrained} problem, where agents attempt to agree to a variable $x$ such that local equations at each agent are satisfied. Problem~\eqref{Eq:ProblemI} can also be formulated as a convex optimization problem, specifically linear regression, and solved using plethora of distributed optimization methods as explored in \cite{yang2020distributed}. Authors augment their optimization based algorithms with a finite-time decentralized consensus scheme to achieve finite-time convergence of iterates to the solution. %
In comparison, our approach is not incremental and only needs two steps -- (a) computing local updates, followed by, (b) fast aggregation and exact solution computation. Our algorithm converges to the unique least squares solution in finite-time and additionally guarantees information-theoretic privacy of local data/equations $(A_i,b_i)$.

Few algorithms focus on privacy of local equations $(A_i,b_i)$. In this paper, we design algorithms with provable privacy properties. One can leverage vast private optimization literature by reformulating problem~\eqref{Eq:ProblemI} as a least-squares regression problem and use privacy preserving optimization algorithms \cite{gade2018acc,gade2018cdc,huang2015differentially,zhang2019admm,7431982,hale2017cloud} on the resulting strongly convex cost function. Differential privacy is employed in \cite{huang2015differentially,hale2017cloud} for distributed convex optimization, however, it suffers from a fundamental privacy - accuracy trade-off \cite{7431982}. Authors in \cite{zhang2019admm,alexandru2017privacy} use partially homomorphic encryption for privacy. However, these methods incur high computational costs and unsuitable for high dimensional problems. Secure Multi-party Computation (SMC) based method for privately solving system of linear equations is proposed in \cite{gascon2017privacy}, however, this solution requires a central Crypto System Provider for generating garbled circuits. In this work, we design a purely distributed solution. Liu {\em et al.} propose a private linear system solver in \cite{liu2018gossip1,liu2018gossip2}, however, as we discuss in Section~\ref{Sec:TITAN_LinSysDiscussion}, our algorithm is faster, requiring fewer iterations. Our prior work \cite{gade2018acc,gade2018cdc} proposes non-identifiability over equivalent problems as a privacy definition and algorithms to achieve it. It admits privacy and accuracy guarantees simultaneously, however, it uses a weaker adversary model that does not know distribution of noise/perturbations used by agents. 

In this paper, we consider a stronger definition of privacy viz. statistical privacy from \cite{gupta2019statistical, gupta2018information}. This definition of privacy allows for a stronger adversary that knows distribution of random numbers used by agents and has unbounded computational capabilities. We call this definition information-theoretic because additional observations do not lead to incremental improvement of adversarial knowledge about private data. Algorithms in \cite{gupta2019statistical, gupta2018information} provide algorithms for private average consensus over undirected graphs. We generalize their work to solve the problem over directed graphs and show a superior finite-time convergence guarantee.

\subsection*{Our Contributions:}

\noindent{\em Algorithm: }We present an algorithm, \af{TITAN} (priva\textbf{T}e f\textbf{I}nite \textbf{T}ime \textbf{A}verage co\textbf{N}sensus), that solves average consensus problem over directed graphs in finite-time, while protecting statistical privacy of private inputs. It involves a distributed {\em Obfuscation Step} to hide private inputs, followed by a distributed recovery algorithm to collect all perturbed inputs at each node. Agents then compute exact average/aggregate using the recovered perturbed inputs. We further leverage \af{TITAN} to solve Problem~\eqref{Eq:ProblemI} in finite time with strong statistical privacy guarantees. 

\vspace{0.5em}

\noindent{\em Convergence Results: }We show that \af{TITAN} converges to the exact average in finite time that depends only on the number of nodes and graph diameter. We show that graph being strongly connected is sufficient for convergence. Moreover, algorithm needs to know only an {\em an upper bound} on the number of nodes and graph diameter. We do not require out-degree of nodes to be known, a limitation commonly observed in push-sum type methods\cite{hadjicostis2015robust}, for solving average consensus over directed graphs. \af{TITAN} based solver converges to the unique least squares solution of \eqref{Eq:ProblemI} in finite time. 

\vspace{0.5em}

\noindent{\em Privacy Results: }We show that \af{TITAN} provides statistical privacy of local inputs as long as weak vertex connectivity of communication graph is at least $t+1$. This condition is also necessary and hence tight. Our privacy guarantee implies that for any two problems~\eqref{Eq:ProblemI} characterized by $(A,b)$ and $(A',b')$, such that rows of $(A,b)$ and $(A',b')$ stored at corrupted nodes are same and least squares solution for both systems is identical, the distribution of observations by the adversary is same. This equivalently entails adversary learning very little (statistically) in addition to the solution.

%!TEX root = root.tex

\section{Problem Formulation}
Consider a group of $m$ agents/nodes connected in a directed network. We model the directed communication network as a directed graph $\Gfrak=(\Vcal,\Ecal)$, where, $\Vcal = \{1,2,\ldots,m\}$ denotes the set of nodes and $\Ecal$ denote reliable, loss-less, synchronous and directed communication links. 

Recall, we are interested in solving a system of linear equations, problem~\eqref{Eq:ProblemI}, that is horizontally partitioned and stored at $m$ agents. Each agent $i$ has access to private $p_i$ linear equations in $n$ variables denoted by \eqref{Eq:ProblemLocal}. Equivalently our problem formulation states that each agent $i$ has access to private data matrices $(A_i,b_i)$ that characterize \eqref{Eq:ProblemLocal}. In this work, we assume that our system of linear equations in \eqref{Eq:ProblemI} admits a unique least squares solution, i.e. $A^T A$ matrix is full rank. If an exact solution for \eqref{Eq:ProblemI} exists, then it matches the least squares solution. Let $\af{ls-sol}(A,b)$ denote the unique least squares solution of $Ax = b$. We wish to compute $x^* \triangleq \af{ls-sol}(A,b)$ that solves the collective system $Ax = b$, while, protecting privacy of local data $(A_i,b_i)$. Next, we discuss the adversary model, privacy definition and few preliminaries.

\subsection{Adversary Model and Privacy Definition} \label{Sec:PrivDefinition}
We consider an honest-but-curious adversary, that follows the prescribed protocol, however, is interested in learning private information from other agents. The adversary can corrupt at most $\tau$ nodes and has access to all the information stored, processed and received by the corrupted nodes. Let us denote the corrupted nodes as $\Acal$. We assume the adversary and corrupted nodes have unbounded storage and computational capabilities. 

The coefficients of linear equations encode private and sensitive information. In the context of robotic or sensor networks, the coefficients of linear equations conceal sensor observations and measurements -- information private to agents. In the context of supply chain management and logistics, linear equations are used to optimize transport of raw-materials, and the coefficients of linear equations often leak business sensitive information about quantity and type of raw-materials/products being transported by a company. Mathematically, adversary seeks to learn local coefficient matrices $(A_i,b_i)$ corresponding to any non-corrupt agent $i$.  

Privacy requires that the observations made by the adversary do not leak significant information about the private inputs. We use the definition of information-theoretic or statistical privacy from \cite{gupta2018information,gupta2019statistical}. Let $\af{View}_\Acal((A,b))$ be the random variable denoting the observations made by set of adversarial nodes $\Acal$ given private inputs $(A,b)$.  We formally define statistical privacy (from \cite{gupta2018information,gupta2019statistical}) as follows.
\begin{definition}
A distributed protocol is $\Acal$-private if for all $(A,b)$ and $(A',b')$, such that $A[i,:] = A'[i,:]$, $b[i] = b'[i]$ for all $i \in \Acal$, and $\af{ls-sol}(A,b) = \af{ls-sol}(A',b')$, the distributions of $\af{View}_\Acal((A,b))$ and $\af{View}_\Acal((A',b'))$ are identical.
\end{definition}
\noindent Intuitively, for all systems of equations $(A',b')$, such that linear equations stored at $\Acal$ are the same and $\af{ls-sol}(A,b) = \af{ls-sol}(A',b')$, observations made by the adversary will have the same distribution, making them statistically indistinguishable from adversary $\Acal$. 

More, generally as discussed in \cite{gupta2018information,gupta2019statistical} for average consensus over private inputs $\{x_i\}_{i=1}^m$, an algorithm is $\Acal-$private, if for all inputs $\{x_i\}_{i=1}^m$ and $\{x'_i\}_{i=1}^m$, such that $x_i = x'_i$ for all $i\in\Acal$ and $\sum_{i=1}^m x_i = \sum_{i=1}^m x'_i$, the distributions of information observed by adversary is the same. In other words, probability density function of  $\af{View}_\Acal(\{x_i\}_{i=1}^m)$ and $\af{View}_\Acal(\{x'_i\}_{i=1}^m)$ are same.

\subsection{Notation and Preliminaries}

For each node $i \in \Vcal$, we define in-neighbor set, $\Ncal_i^{in}$, as the set of all nodes that send information to node $i$; and out-neighbor set, $\Ncal_i^{out}$, as the set of all nodes that receive information from node $i$. Let $\delta(\Gfrak)$ denote the diameter of graph $\Gfrak$, and $B$ denote the incidence matrix of graph $\Gfrak$. Let $\{\perp\}^m$ denote a $m-$dimensional empty vector. Let $\Ucal[a,b)$ be uniform distribution over $[a,b)$.

Modular arithmetic, typically defined over a finite field of integers, involves numbers wrapping around when reaching a certain value. In this work, we use real numbers and define an extension of modular arithmetic over reals:
\begin{definition} \label{Def:Modulo}
Consider a real interval $[0,a)\in \mathbb{R}$. We define $\mod(x,a)\in [0,a)$ as the remainder obtained when x is divided by a and the quotient is an integer. That is, $\mod(x,a) = x - pa$ where $p$ is the unique integer such that $\mod(x,a) \in [0,a)$.
\end{definition}
\noindent Modulo operator satisfies useful properties as detailed below. The proofs are easy and omitted for brevity.%These properties are easy to prove and the proofs are 
\begin{remark}\label{Rem:PropModulo}
Modular arithmetic over reals satisfies following properties for all real numbers $\{y_i\}_i$ and integer $q$.
\begin{enumerate}
\item $\mod(\sum_{i=1}^q y_i,a) = \mod(\sum_{i=1}^q\mod(y_i,a),a)$,
\item $\mod(-y_i,a) = a - \mod(y_i,a).$
\end{enumerate}
\end{remark}

%!TEX root = root.tex

\section{\af{TITAN} - Private Average Consensus}
In this section, we develop \af{TITAN}, an algorithm for solving distributed average consensus with provable statistical privacy and finite-time convergence. In Section~\ref{Sec:TITAN_LinSys}, we will use \af{TITAN} to solve Problem~\eqref{Eq:ProblemI} with statistical privacy of local data $(A_i,b_i)$ and finite-time convergence to $x^*$.

Consider a simple average consensus problem over $m$ agents connected using directed graph $\Gfrak$. Each node $i \in \Vcal$ has access to private input $x_i \in [0,a)$. The objective is to compute average $(1/m) \sum_{i=1}^m x_i$, while, protecting statistical privacy of inputs $x_i$ (see Section~\ref{Sec:PrivDefinition}).

\af{TITAN} involves an obfuscation step to hide private information and generate obfuscated inputs. This is followed by several rounds of \af{Top-k} consensus primitive for distributed recovery of perturbed inputs. Consequently, we exploit {\em modulo aggregate invariance} property of the obfuscation step and locally process perturbed inputs to arrive at desired average. The obfuscation step guarantees statistical privacy of inputs, while, the distributed recover and local computation of average are key to finite-time convergence of the algorithm. We detail each of the steps below.

\begin{algorithm}[!b]
\SetAlgoLined
\LinesNumbered
\SetKw{KwRet}{Return}
\KwIn{$\{x_i\}_{i=1}^m$, where, $x_i \in [0,a)$ $\forall i\in\Vcal$, $T$, $k$}
\KwOut{$\bar{x} = (1/m) \sum_{i=1}^m x_i$}
\BlankLine
\textbf{Initialization} Node $i$ initializes $r_i = \{\perp\}^m$
\BlankLine
\nonl\textbf{Obfuscation Step} \\
Node $i$ sends random numbers $r_{ij} \sim \Ucal[0,ma)$ to each out-neighbor $j\in\Ncal_i^{out}$ \\
Node $i$ constructs perturbation $t_i$,
\begin{equation} 
t_i = \mod \left( \sum_{j \in \Ncal_i^{in}} r_{ji} - \sum_{j \in \Ncal_i^{out}} r_{ij}, ma \right) \label{Eq:Mask} 
\end{equation} \\
Each node $i$ perturbs private input,
\begin{align}
 \tilde{x}_i = \mod(x_i+t_i, ma).   \label{Eq:PerturbedStates} 
\end{align} \\
%where, $\sum_{i=1}^m t_i = 0$.
%\BlankLine
\nonl\textbf{Distributed Recovery using \af{Top-k} Primitive} \\
Each node $i\in\Vcal$ runs \af{Top-k} primitive $\ceil{m/k}$ times \\
\For{$t = 0, 1, \ldots, \ceil{m/k}-1$}{
$\left(r_i[tk+1 : (t+1)k], I_i[tk+1 : (t+1)k]\right) = \af{Top-k}\left((\{x_i\}_{i=1}^m\setminus r_i[1 : tk], \{i\}_{i=1}^m \setminus I_i[1:tk]), T \right)$ \\
}
\KwRet{$\bar{x} = (1/m) \mod(\sum_{l=1}^m r_i[l], ma)$.}
 \caption{\af{TITAN}$\left( \{x_i\}_{i=1}^m, T, k, m \right)$} \label{Algo:PrivateAvgConsensus}
\end{algorithm}

\subsection*{Obfuscation Step}
The {\em obfuscation step} is a distributed method to generate network correlated noise that vanishes under modulo operation over the aggregate. First, each node $i$ sends uniform random noise $r_{ij}\sim\Ucal[0,ma)$ to out-neighbors $\Ncal_i^{out}$ and receives $r_{li}$ from in-neighbors $l\in\Ncal_i^{in}$ (Line~2, Algorithm~\ref{Algo:PrivateAvgConsensus}). 

Next, each agent $i$ computes perturbation $t_i$ using \eqref{Eq:Mask} (Line~3, Algorithm~\ref{Algo:PrivateAvgConsensus}). Observe that due to the modulo operation, each perturbation $t_i$ satisfies $t_i \in [0,ma)$.

Finally, agent $i$ adds perturbation $t_i$ to its private value $x_i$ and performs a modulo operation about $ma$ to get the perturbed input, $\tilde{x}_i$, as seen in \eqref{Eq:PerturbedStates}. Observe that $\tilde{x}_i \in [0,ma)$. % due to the modulo operation.

We now show the {\em modulo aggregate invariance} property of the obfuscation mechanism described above. Notice that each noise $r_{ij}$ is added by node $j$ to get $t_j$ (before modulo operation) and subtracted by node $i$ to get $t_i$ (before modulo operation). This gives us, 
\begin{align}
&\mod\left(\sum_{i=1}^m t_i , ma\right)  \nonumber\\
&\stackrel{(a)}{=} \mod\left(\sum_{i=1}^m \mod \left( \sum_{j \in \Ncal_i^{in}} r_{ji} - \sum_{j \in \Ncal_i^{out}} r_{ij}, ma\right), ma \right) \nonumber \\
&\stackrel{(b)}{=} \mod\left(\sum_{i=1}^m \left[\sum_{j \in \Ncal_i^{in}} r_{ji} - \sum_{j \in \Ncal_i^{out}} r_{ij} \right], ma \right) \nonumber \\
&\stackrel{(c)}{=} \mod(0,ma) = 0. \label{Eq:PerturbationsBalanced}
\end{align}
In the above expression, $(a)$ follows from definition of $t_i$ in \eqref{Eq:Mask}, $(b)$ follows from property 1 in Remark~\ref{Rem:PropModulo}, and $(c)$ is a consequence of perturbation design in \eqref{Eq:Mask}. We call this as {\em modulo aggregate invariance} of the obfuscation step. 

\subsection*{Distributed Recovery via \af{Top-k} Consensus Primitive}

We perform distributed recovery of perturbed inputs, that is, we run a distributed algorithm to ``gather'' all perturbed inputs ($\{\tilde{x}_i\}_{i=1}^m$) at each node. After the completion of this step, each node $i$ will have access to the entire set of perturbed inputs $\{\tilde{x}_i\}_{i=1}^m$. \af{Top-k} consensus primitive is a method to perform distributed recovery.

The \af{Top-k} consensus primitive is a distributed protocol for all nodes to agree on the largest $k$ inputs in the network. In \af{TITAN}, we run the \af{Top-k} consensus primitive and store the resulting list of top-$k$ perturbed inputs. We then run \af{Top-k} primitive again while excluding the perturbed inputs recovered from prior iterations. Executing \af{Top-k} consensus primitive successively $\ceil{m/k}$ times leads us to the list of all perturbed inputs in the network (Lines 5-8, Algorithm~\ref{Algo:PrivateAvgConsensus}). 

\vspace{1em}

\noindent {\em \af{Top-k} Consensus Primitive: }Recall, each node $i$ has access to a perturbed input $\tilde{x}_i$ and an unique identifier $i$. The \af{Top-k} consensus is a consensus protocol for nodes to agree over $k$ largest input values and associated node id's with ties going to nodes with larger id. Formally, the algorithm results in each node agreeing on $\{(x_{(1)}, \ldots, x_{(k)}), (i_{(1)}, \ldots, i_{(k)})\}$, where, $x_{(1)} \geq \ldots \geq x_{(k)} \geq x_{(k+1)} \geq \ldots \geq x_{(m)}$ is the ordering of private inputs $\{x_j\}_{j=1}^m$ and $i_{(k)}$ denote the ids corresponding to $x_{(k)}$ for each $k$. Ties go to nodes with larger id, implying, if $x_{(j)} = x_{(j+1)}$ then $i_{(j)} > i_{(j+1)}$.

Each node $i$ stores an estimate of $k$ largest inputs and their id's denoted by $L_i$ and $\ell_i$ respectively. These vectors, $L_i$ and $\ell_i$, are initialized with local private input and own agent id respectively (Line 1, Algorithm~\ref{Algo:TopkConsensus}). 

At each iteration $r = 1, \ldots, T$, agent $i$ share their estimates $L_i$ and $\ell_i$ to out-neighbors and receives $L_j$ and $\ell_j$ from in-neighbors $j \in \Ncal_i^{in}$ (Line 3, Algorithm~\ref{Algo:TopkConsensus}). Agent $i$ sets $L^+_i$ equal to the $k$ largest values of available inputs, that is $L_i$ and $\cup_{j \in \Ncal_i^{in}} L_j$; and sets $\ell^+_i$ as id's corresponding to the entries in $L^+_i$ (Lines 5,6, Algorithm~\ref{Algo:TopkConsensus}). In case of ties, that is several agents having the same input, the tie goes to agent with larger id. Next, agents update the local estimate of top $k$ entries as $L_i = L^+_i$ and $\ell_i = \ell^+_i$ (Line 8, Algorithm~\ref{Algo:TopkConsensus}). 

This process of selection of largest perturbed input and it's id is repeated $T$ times. As stated in Theorem~\ref{Th:TopKcorrectness}, each $L_i$ and $\ell_i$ converge to the {\em Top-k} values and associated id's respectively provided that $T \geq \delta(\Gfrak)$, the graph diameter.

\vspace{1em}

Recall, that running \af{Top-k} consensus primitive $\ceil{m/k}$ times, successively, allows each node to recover perturbed inputs $\{\tilde{x}_i\}_{i=1}^m$. Agents then add the perturbed inputs recovered by \af{Top-k} consensus algorithm and exploit  modulo aggregate invariance property of obfuscation step to exactly compute, $\bar{x} = (1/m) \mod(\sum_{l=1}^m r_i[l], ma)$.

\begin{algorithm}[t]
\LinesNumbered
\DontPrintSemicolon   
\SetKw{KwRet}{Return}
\KwIn{$\{(x_i,i)\}_{i=1}^m$, $T$, $k$}
\KwOut{$\{(x_{(1)}, \ldots, x_{(k)}), (i_{(1)}, \ldots, i_{(k)})\}$} \BlankLine
\textbf{Initialization} Node $i$ initializes two $k$ dimensional vectors $L_i = \{x_i, \perp, \ldots, \perp\}$ \& $\ell_i = \{i, \perp, \ldots, \perp\}$
\BlankLine
\For{$t = 1, \ldots, T$}{
Node $i$ sends $L_i$ \& $\ell_i$ to out-neighbors $\Ncal_i^{out}$ \;
%Node $i$ receives $L_j$ \& $\ell_j$ from in-neighbors $j \in \Ncal_i^{in}$\;
\nonl Each node $i \in \Vcal$ performs: \;
\For{$r = 1, \ldots,k$}{
$L^+_i[r] = \max (L_i \cup_{j \in \Ncal_i^{in}} L_j \setminus (\cup_{s=1}^{r-1} L^+_i[s]))$  \;
$\ell^+_i[r] = \text{ id corresponding to } L_i[r]$ \;
$\qquad$ \text{\ldots \ Ties go to node with larger id} \;
$L_i = L^+_i$ and $\ell_i = \ell_i^+$
}
}
\BlankLine
\KwRet{$\{L_i, \ell_i\}$ $\rightarrow$ $\{(x_{(1)}, \ldots, x_{(k)}), (i_{(1)}, \ldots, i_{(k)})\}$}
\caption{\af{Top-k} consensus: \af{Top-k}($\{(x_i,i)\}_{i=1}^m$,$T$)} \label{Algo:TopkConsensus}
\end{algorithm}

\vspace{1em}

\subsection{Results and Discussion} \label{Sec:TITANDiscussion}

\noindent{\em Correctness Guarantee: }We first begin by a correctness result for the \af{Top-k} consensus primitive. It is a consequence of convergence of \af{max} consensus over directed graphs.
\begin{theorem} [Correctness of \af{Top-k}] \label{Th:TopKcorrectness}
If $\Gfrak$ is a strongly connected graph with diameter $\delta(\Gfrak)$ and $T\geq \delta(\Gfrak)$, then $L_i = L_j = \{x_{(1)}, \ldots, x_{(k)}\}$, $\ell_i = \ell_j = \{I_{(1)},\ldots,I_{(k)}\}$, for each $i,j\in \Vcal$, for the \af{Top-k} consensus algorithm.
\end{theorem}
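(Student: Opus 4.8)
The plan is to reduce the Top-$k$ consensus primitive to $k$ interleaved rounds of ordinary \af{max}-consensus over the strongly connected graph $\Gfrak$, and then invoke the standard fact that \af{max}-consensus converges in at most $\delta(\Gfrak)$ synchronous rounds. Since ties are broken by node id, it is cleanest to work not with the raw values $x_i$ but with the augmented keys $(x_i, i) \in \mathbb{R} \times \Vcal$ under the lexicographic order; this makes all $m$ keys distinct and gives a well-defined strict ordering $x_{(1)} > \cdots > x_{(m)}$ with associated ids, matching the statement of the theorem. First I would fix this total order and restate what Line~5--6 of Algorithm~\ref{Algo:TopkConsensus} does: at each outer iteration $t$, node $i$ replaces its length-$k$ sorted list $L_i$ by the $k$ largest keys in the multiset $\{L_i\} \cup \bigcup_{j \in \Ncal_i^{in}} L_j$. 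The key structural observation is that this "top-$k$ of the union of sorted length-$k$ lists" operation is exactly the componentwise behaviour of $k$ parallel \af{max}-consensus processes, provided one tracks the invariant that each $L_i$ always contains the $k$ largest keys among a growing set of "seen" keys.

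The main inductive claim I would prove is: after $t$ iterations of the outer loop, $L_i^{(t)}$ equals the sorted list of the $k$ largest keys in the set $S_i^{(t)} := \{\, (x_j,j) : \mathrm{dist}_\Gfrak(j,i) \le t \,\}$ (with the convention $\mathrm{dist}(i,i)=0$), where $\mathrm{dist}$ is the directed shortest-path distance. The base case $t=0$ is the initialization $L_i = \{x_i,\perp,\dots,\perp\}$, reading $\perp$ as $-\infty$. For the inductive step, note that a key $(x_j,j)$ has reached node $i$'s information set within $t+1$ hops iff it has reached some in-neighbor $\ell \in \Ncal_i^{in}$ within $t$ hops, or $j=i$; hence $S_i^{(t+1)} = \{(x_i,i)\} \cup \bigcup_{\ell \in \Ncal_i^{in}} S_\ell^{(t)}$. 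The nontrivial point is that taking the top-$k$ of $\{L_i^{(t)}\} \cup \bigcup_\ell L_\ell^{(t)}$ — i.e., of the top-$k$'s of the $S$-sets — yields the same result as the top-$k$ of $S_i^{(t+1)}$ itself. This is because discarding, from each $S_\ell^{(t)}$, everything below its own $k$-th largest key can never discard a key that would end up among the global top-$k$ of the union: if a key is in the overall top-$k$ of the union but was dropped from $S_\ell^{(t)}$, then $S_\ell^{(t)}$ already contributed $k$ strictly larger keys to the union, a contradiction. I would state this as a small lemma ("top-$k$ commutes with union when each operand is pre-truncated to its own top-$k$") and prove it in two lines. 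The id-tie-breaking causes no trouble because the lexicographic order is already strict, so "the $k$ largest" is unambiguous at every stage.

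Finally I would close the argument: since $\Gfrak$ is strongly connected with diameter $\delta(\Gfrak)$, for every pair $j,i$ we have $\mathrm{dist}_\Gfrak(j,i) \le \delta(\Gfrak)$, so $S_i^{(\delta(\Gfrak))} = \{(x_j,j)\}_{j=1}^m$ for every $i$. Thus $L_i^{(t)}$ equals the global sorted top-$k$ list $\{x_{(1)},\dots,x_{(k)}\}$ with ids $\{I_{(1)},\dots,I_{(k)}\}$ for all $t \ge \delta(\Gfrak)$, and in particular for $T \ge \delta(\Gfrak)$; the lists at all nodes agree, giving $L_i = L_j$ and $\ell_i = \ell_j$, which is the claim. (One should also check $\ell_i^{(t)}$ tracks the ids of $L_i^{(t)}$, which is immediate since Line~6 copies the id attached to each selected key.) I expect the main obstacle — really the only place where care is needed — to be the commutation lemma for the interaction of the top-$k$ truncation with the union over in-neighbors: one must argue that the length-$k$ truncation at intermediate nodes never prematurely deletes a key destined for the global top-$k$. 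Everything else (strong connectivity $\Rightarrow$ finite diameter, monotone growth of $S_i^{(t)}$, stationarity once $S_i^{(t)}$ saturates) is routine. A minor bookkeeping point worth a sentence is handling the $\perp$ entries before the lists fill up, which is why I pad with $-\infty$ and observe that $\perp$'s are pushed out in the order they are replaced.
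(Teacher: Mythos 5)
Your proposal is correct and follows essentially the same route as the paper's own (much terser) argument: the paper simply observes that each of the $k$ largest inputs propagates to every node within $\delta(\Gfrak)$ rounds of the max-style updates, which is exactly what your induction on the distance-$t$ information sets $S_i^{(t)}$ formalizes. Your truncation-commutation lemma (top-$k$ of the union of per-node top-$k$ lists equals top-$k$ of the union) is the step the paper leaves implicit, and your treatment of ties via lexicographic keys matches the algorithm's id-based tie-breaking, so no changes are needed.
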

\noindent The result establishes a lower bound on parameter $T$ for correctness of \af{Top-k} consensus primitive. If $\delta(\Gfrak)$ is not exactly known, we can set $T$ to be any upper bound on $\delta(\Gfrak)$, without worrying about correctness.  

The ability of \af{Top-k} protocol to recover $k$ largest perturbed inputs leads us to the correctness guarantee for \af{TITAN}. As a consequence of Theorem~\ref{Th:TopKcorrectness}, we can conclude that $\ceil{m/k}$ successive execution of \af{Top-k} primitive over perturbed inputs leads to recovery of all perturbed inputs at each node. Moreover, we use the aggregate invariant property of the obfuscation step, implying for all graphs $\Gfrak$, 
\begin{align}
    \mod\left(\sum_{i=1}^m \tilde{x}_i, ma\right) = \sum_{i=1}^m x_i.
\end{align}
We prove this in Section~\ref{Sec:Proofs}. Consequently, perturbed inputs allows us to compute the correct aggregate and average. The following result formally states this result: 
\begin{theorem}[Correctness of \af{TITAN}]
If $\Gfrak$ is strongly connected and $T \geq \delta(\Gfrak)$, then \af{TITAN} (Algorithm~\ref{Algo:PrivateAvgConsensus}) converges to the exact average of inputs $\bar{x} = \frac{1}{m} \sum_{i=1}^m x_i$ in finite time given by $T\ceil{m/k}$.  \label{Th:Correctness}
\end{theorem}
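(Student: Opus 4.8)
The plan is to split the claim into two essentially independent pieces — an algebraic identity showing that the recovered perturbed inputs encode the correct aggregate, and a graph‑theoretic/combinatorial argument showing that every node does recover \emph{all} $m$ perturbed inputs within $T\lceil m/k\rceil$ rounds — and then to glue them together.

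First I would establish the \emph{modulo aggregate invariance} identity $\mod\!\left(\sum_{i=1}^m \tilde x_i, ma\right) = \sum_{i=1}^m x_i$, which is announced just before the theorem. Starting from $\tilde x_i = \mod(x_i+t_i,ma)$ and applying property~1 of Remark~\ref{Rem:PropModulo}, $\mod(\sum_i \tilde x_i, ma) = \mod(\sum_i (x_i+t_i), ma) = \mod(\sum_i x_i + \sum_i t_i, ma)$. The computation in \eqref{Eq:PerturbationsBalanced} already gives $\mod(\sum_i t_i, ma)=0$, so a further application of property~1 yields $\mod(\sum_i \tilde x_i, ma) = \mod(\sum_i x_i, ma)$. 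Since each $x_i\in[0,a)$ we have $\sum_i x_i\in[0,ma)$, hence $\mod(\sum_i x_i,ma)=\sum_i x_i$, proving the identity.

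Next I would show that after the $\lceil m/k\rceil$ successive calls to \af{Top-k}, every node holds the full multiset $\{\tilde x_i\}_{i=1}^m$ together with the correct ids. This goes by induction on the loop counter $t$, invoking Theorem~\ref{Th:TopKcorrectness} at each step (legitimate since $\Gfrak$ is strongly connected and $T\ge\delta(\Gfrak)$): at the end of iteration $t$, the pair $(r_i[1{:}tk],I_i[1{:}tk])$ equals the $tk$ largest perturbed inputs and their ids with ties broken toward larger id, simultaneously at every node. Because \af{Top-k} is fed the remaining values \emph{together with their ids} $\{i\}\setminus I_i[1{:}tk]$, the successive blocks of size $k$ are pairwise disjoint and exhaust the index set without double counting even when several $\tilde x_i$ coincide; the final, possibly short, block of size $m-k\lfloor m/k\rfloor$ is handled identically, the $\perp$-padding contributing nothing. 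Consequently each node ends with $\sum_{l=1}^m r_i[l] = \sum_{i=1}^m \tilde x_i$, so the returned value is $(1/m)\mod(\sum_{l=1}^m r_i[l],ma) = (1/m)\sum_{i=1}^m x_i = \bar x$ by the identity above. The time bound is immediate: each of the $\lceil m/k\rceil$ \af{Top-k} calls runs for $T$ iterations, so convergence occurs after exactly $T\lceil m/k\rceil$ rounds.

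The step I expect to be the main obstacle is the bookkeeping in the inductive argument: making precise that ``exclude the already‑recovered entries'' removes exactly one copy per recovered $(\tilde x_i,i)$ pair, and that ordering the pairs $(\tilde x_i,i)$ with id‑based tie‑breaking induces a well‑defined total order, so that Theorem~\ref{Th:TopKcorrectness} delivers a genuinely deterministic, node‑independent top‑$k$ at each stage and the $\lceil m/k\rceil$ blocks form a partition of the $m$ perturbed inputs — no input skipped, none counted twice. Once that is settled, everything else is the routine algebra above.
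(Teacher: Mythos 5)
Your proposal is correct and follows essentially the same route as the paper's proof: the modulo-aggregate-invariance identity via Remark~\ref{Rem:PropModulo} and \eqref{Eq:PerturbationsBalanced} together with $\sum_i x_i \in [0,ma)$, plus invoking Theorem~\ref{Th:TopKcorrectness} for each of the $\ceil{m/k}$ successive \af{Top-k} calls to get the $T\ceil{m/k}$ bound. The only difference is that you spell out the exclusion/tie-breaking bookkeeping across successive \af{Top-k} rounds, which the paper states only implicitly; this is a harmless (indeed welcome) elaboration, not a different argument.
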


\noindent Note, for finite-time convergence, we only need $\Gfrak$ to be strongly connected. The time required for convergence is dependent only on the number of agents $m$ \footnote{If agents know only an upper bound on $\tilde{m} \geq m$, then we can run \af{TITAN} on inputs $x_i= 1$ for each $i\in\Vcal$ while using $\tilde{m}$ instead of $m$ and $a = 2$. Finally, by computing $\mod(\sum_i \tilde{x}_i, \tilde{m}a)$ we recover $m$ exactly.}, parameter $T$ (an upper bound on graph diameter $\delta(\Gfrak)$) and parameter $k$.

\vspace{0.5em}

\noindent{\em Privacy Guarantee: }The privacy guarantee is a consequence of the obfuscation step used in \af{TITAN}. Let $\bar{\Gfrak} = (\bar{\Vcal},\bar{\Ecal})$ be the undirected version of $\Gfrak$. More specifically, $\bar{\Gfrak}$ has the same vertex set $\bar{\Vcal} = \Vcal$ but the edge set $\bar{\Ecal}$ is obtained by taking all the edges in $\Ecal$ and augmenting it with the reversed edges. Consequently, $\bar{\Gfrak}$ is undirected. We define {\em weak vertex-connectivity} of a directed graph $\Gfrak$ as the vertex-connectivity of its undirected variant $\bar{\Gfrak}$. Weak vertex-connectivity of $\Gfrak$ is $\kappa(\bar{\Gfrak})$, where, $\kappa$ denotes the vertex-connectivity. We show that provided the weak vertex-connectivity of $\Gfrak$ is at least $\tau+1$, \af{TITAN} preserves statistical privacy of input. %
\begin{theorem}
If weak vertex-connectivity $\kappa(\bar{\Gfrak}) \geq \tau+1$, then \af{TITAN} is $\Acal$-private against any set of adversaries $\Acal$ such that $|\Acal| \leq \tau$.  
\label{Th:Privacy}
\end{theorem}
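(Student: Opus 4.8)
\emph{Proof proposal.} The plan is to reduce $\Acal$-privacy to a single distributional claim about the Obfuscation Step. Write $H=\Vcal\setminus\Acal$ for the honest nodes and let $\rho^{\Acal}$ denote the collection of all random numbers $r_{ij}$ on edges having at least one endpoint in $\Acal$. First I would prune the view: up to a deterministic map, $\af{View}_{\Acal}$ equals the triple $\left((x_i)_{i\in\Acal},\ \rho^{\Acal},\ (\tilde x_i)_{i\in H}\right)$. Indeed, since the adversary is honest-but-curious, the numbers it generates and receives in the Obfuscation Step are exactly $\rho^{\Acal}$; for $i\in\Acal$ the quantities $t_i$ and $\tilde x_i$ are deterministic functions of $(x_i,\rho^{\Acal})$ by \eqref{Eq:Mask}--\eqref{Eq:PerturbedStates}; and the Distributed Recovery phase runs a deterministic protocol on $\{\tilde x_i\}_{i=1}^m$, so its transcript (cf.\ Theorems~\ref{Th:TopKcorrectness}--\ref{Th:Correctness}) is a deterministic function of $\{\tilde x_i\}_{i=1}^m$ and $\Gfrak$. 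Since $(x_i)_{i\in\Acal}$ is fixed by hypothesis and the law of $\rho^{\Acal}$ (i.i.d.\ $\Ucal[0,ma)$) does not depend on the input, it then suffices to prove that the conditional law of $(\tilde x_i)_{i\in H}$ given $\rho^{\Acal}$ is the same for any two admissible inputs $\{x_i\}_{i=1}^m,\{x_i'\}_{i=1}^m$ (i.e.\ agreeing on $\Acal$ and with $\sum_i x_i=\sum_i x_i'$).

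Next I would split, for $i\in H$, $t_i=\mod(a_i+s_i,ma)$, where $a_i$ gathers the terms of \eqref{Eq:Mask} on edges incident to $\Acal$ (so $a_i$ is measurable with respect to $\rho^{\Acal}$) and $s_i=\sum_{j\in\Ncal_i^{in}\cap H}r_{ji}-\sum_{j\in\Ncal_i^{out}\cap H}r_{ij}$ depends only on random numbers on edges internal to $H$. Every internal edge contributes its number once with a $+$ and once with a $-$ sign, so $\sum_{i\in H}s_i=0$ identically; hence by Remark~\ref{Rem:PropModulo}, $\mod\!\left(\sum_{i\in H}t_i,ma\right)=\mod\!\left(\sum_{i\in H}a_i,ma\right)=:\gamma$, a function of $\rho^{\Acal}$ only. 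The crux is then to show $(t_i)_{i\in H}$ is uniform on $U_\gamma:=\{(t_i)\in[0,ma)^{H}:\mod(\sum_i t_i,ma)=\gamma\}$. This is where weak vertex-connectivity enters: since $|\Acal|\le\tau$ and $\kappa(\bar\Gfrak)\ge\tau+1$, the undirected graph $\bar\Gfrak[H]$ is connected. Fixing a spanning tree of $\bar\Gfrak[H]$ rooted at some $\rho\in H$, with $\{r_v\}_{v\in H\setminus\{\rho\}}$ the i.i.d.\ $\Ucal[0,ma)$ variables on the corresponding directed edges, and conditioning on all remaining randomness, one gets $t_v=\mod(b_v+\sigma_v r_v-\sum_{w\in\mathrm{ch}(v)}\sigma_w r_w,ma)$ for $v\ne\rho$ with $b_v$ fixed and $\sigma_v\in\{+1,-1\}$. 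Ordering the nodes so each precedes its parent, the map $(r_v)\mapsto(t_v)_{v\ne\rho}$ is triangular with each coordinate a measure-preserving bijection of $[0,ma)$ in its fresh variable, hence a Lebesgue-measure-preserving bijection of $[0,ma)^{|H|-1}$; so $(t_v)_{v\ne\rho}$ is uniform on the cube and $t_\rho=\mod(\gamma-\sum_{v\ne\rho}t_v,ma)$ is pinned down, giving $(t_i)_{i\in H}$ uniform on $U_\gamma$ conditionally on $\rho^{\Acal}$ --- and, crucially, with a law that makes no reference to the private inputs.

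To finish: the translation $(t_i)_{i\in H}\mapsto(\mod(x_i+t_i,ma))_{i\in H}$ is a measure-preserving bijection of $U_\gamma$ onto $U_{\gamma'}$ with $\gamma'=\mod(\gamma+\sum_{i\in H}x_i,ma)$, so conditionally on $\rho^{\Acal}$ the honest perturbed inputs $(\tilde x_i)_{i\in H}$ are uniform on $U_{\gamma'}$. Admissibility forces $\sum_{i\in H}x_i=\sum_{i\in H}x_i'$ (the $\Acal$-coordinates agree and the totals agree), so $\gamma'$ coincides for the two inputs; hence the conditional laws of $(\tilde x_i)_{i\in H}$ agree, and by the first paragraph the laws of $\af{View}_{\Acal}$ agree, i.e.\ \af{TITAN} is $\Acal$-private.

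I expect the middle step to be the main obstacle: one must invoke connectivity in exactly the right place --- to secure a spanning tree of $\bar\Gfrak[H]$, i.e.\ enough ``free'' internal random numbers that the adversary never sees --- and then carry out the change-of-variables argument showing the map from tree-edge noise to honest perturbations is a measure-preserving bijection of the cube. That argument is precisely what upgrades the intuitive ``the adversary cannot isolate any honest $t_i$'' into the exact equality of distributions the privacy definition demands. The degenerate case $|H|\le 1$ (where $\sum_i x_i=\sum_i x_i'$ already pins the honest inputs, so privacy is vacuous) should be disposed of separately.
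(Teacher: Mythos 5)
Your proposal is correct and follows essentially the same route as the paper: it reduces the adversary's view to its own inputs, the randomness on edges touching $\Acal$, and the honest perturbed inputs; splits each honest $t_i$ into adversarial-edge and honest-edge contributions; uses $\kappa(\bar{\Gfrak})\geq\tau+1$ to get connectivity of the honest subgraph; shows the honest masks are uniform on the sum-constrained slice conditioned on the adversary's randomness (the paper's Lemma~\ref{Lem:Mask} applied to $\Gfrak_\Hcal$); and concludes via the translation argument of Lemma~\ref{Lem:Randomness} that the conditional law of $\{\tilde{x}_i\}_{i\in\Hcal}$ depends only on $\sum_i x_i$. Your spanning-tree, triangular change-of-variables step is simply a more explicit justification of the joint uniformity that the paper's Lemma~\ref{Lem:Mask} states rather tersely, so the substance of the argument coincides.
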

\noindent Note, we require $\Gfrak$ to be both strongly connected and possess weak vertex-connectivity of at least $\tau+1$ for achieving both finite-time correctness and statistical privacy guarantees. Moreover, the weak vertex-connectivity condition is also necessary and can be showed by contradiction, similar to Proof of Theorem~2 in \cite{gade2017private}. Consequently, the weak vertex-connectivity condition is {\em tight}.

\vspace{0.5em}
\noindent{\em Memory Costs: }\af{Top-k} primitive requires each node to maintain vectors $L_i$ and $\ell_i$ in addition to recovered pertrubed inputs. Overall the memory required per node is $(2k+m)d$ units, where, $d$ is the dimension of input $x_i$. This is larger as compared to standard average consensus methods and ratio consensus methods that require $d$ and $2d$ units respectively. Observe the trade-off between Memory Overhead and Convergence Time. As $k$ increases from $1$ to $m$, the convergence time decreases from $Tm$ to $T$, while the memory overhead (per node) increases from $(2+m)d$ to $3md$.  

\vspace{0.5em}
\noindent{\em Communication Costs: }The obfuscation step requires node $i$ to send $|\Ncal_i^{out}|d$ messages. Moreover, \af{Top-k} algorithm involves exchange of $L_i$ and $\ell_i$ by each node. This additionally requires $2k|\Ncal_i^{out}|T\ceil{m/k}d$ messages in total. Together, the communication overhead for node $i$ is $|\Ncal_i^{out}| \left( 2k T \ceil{m/k} + 1\right)d$. Total communication cost (per node) is largely independent of $k$, as total information exchanged over entire execution does not change with $k$. 

\vspace{0.5em}
\noindent{\em Comparison with \af{FAIM}: } Oliva {\em et al.} propose \af{FAIM}, finite-time average-consensus by iterated max-consensus, in \cite{oliva2016distributed}. \af{TITAN} has several similarities with \af{FAIM} and we can recover a statistically private version of \af{FAIM} by setting $k=1$. Our work, in addition to finite-time average consensus, is directed toward provably privacy of local information.

%!TEX root = root.tex

\section{Private Solver for System of Linear Equations} \label{Sec:TITAN_LinSys}

In this section, we develop a solver (Algorithm~\ref{Algo:PSLE}) employing \af{TITAN} to privately solve problem~\eqref{Eq:ProblemI}.

\begin{algorithm}[!b]
\SetAlgoLined
\LinesNumbered
\SetKw{KwRet}{Return}
\KwIn{$\{A_i^T A_i, A_i^T b_i\}_{i=1}^m$ \text{and} $a > \text{the largest entry of }[A_i^TA_i\; A_i^T b_i]$ over all $i\in\{1,2,\ldots,m\}$}
\KwOut{$x^*$}
\BlankLine
\nonl Run \textbf{\af{TITAN}} over each entry of $A_i^T A_i$ and $A_i^T b_i$\\
\BlankLine
Compute: $X$ = \af{TITAN}($\{A_i^T A_i\}_{i=1}^m, T, k, m$) \\
Compute: $Y$ = \af{TITAN}($\{A_i^T b_i\}_{i=1}^m, T, k, m$) \\
\KwRet{$x^* = (m X)^{-1} (m Y)$.}
\caption{Private Solver for Problem~\eqref{Eq:ProblemI}} \label{Algo:PSLE}
\end{algorithm}

The least squares solution to system of linear equations \eqref{Eq:ProblemI} can be expressed in closed form as, $(A^T A)^{-1} A^T b$. If an exact solution to \eqref{Eq:ProblemI} exists then the least squares solution matches it. Moreover, as the equations are horizontally partitioned, we can rewrite, 
\begin{align}
A^TA = \sum_{i=1}^m A_i^T A_i \quad A^Tb = \sum_{i = 1}^m A_i^T b_i.
\end{align}
Consequently, solution to system of linear equations can be computed by privately aggregating $A_i^T A_i$ and $A_i^T b_i$ separately over the network and computing,
\begin{align}
x^* = \left(\sum_{i=1}^m A_i^T A_i \right)^{-1} \left( \sum_{i=1}^m A_i^T b_i\right). \label{Eq:Solution}
\end{align}
Privately computing $x^*$ is equivalent to agents privately aggregating $A_i^T A_i$ and $A_i^T b_i$ over the directed network followed by locally computing $x^*$ using Eq.~\eqref{Eq:Solution}. 

\vspace{0.5em}

We assume, w.l.o.g, that each entry in matrix local updates $A_i^T A_i$ and $A_i^Tb_i$ lies in $[0,a)$, where, $a >$ largest entry in matrices $A_i^T A_i$ and $A_i^T b_i$. If this is not satisfied, we can add the same constant to each entry in the matrix and subtract it after computing aggregate. Next, in Algorithm~\ref{Algo:PSLE}, we run \af{TITAN} on each entry of matrices $\{A_i^T A_i\}_{i=1}^m$ to get update matrix $X$, run \af{TITAN} on each entry of vector $\{A_i^T b_i\}_{i=1}^m$ to get update vector $Y$. We select $T\geq \delta(\Gfrak)$ and a parameter $k \leq m$ following the discussion in Section~\ref{Sec:TITANDiscussion}. As a consequence of Theorem~\ref{Th:Correctness}, the algorithms terminate in finite time and $X = \frac{1}{m} \sum_{i=1}^m A_i^T A_i$ and $Y = \frac{1}{m}\sum_{i=1}^m A_i^T b_i$. Finally, we use \eqref{Eq:Solution} to compute the least squares solution. %(Line 3, Algorithm~\ref{Algo:PSLE})

\subsection{Results and Discussion} \label{Sec:TITAN_LinSysDiscussion}
\noindent {\em Correctness and Privacy: }Our solution (Algorithm~\ref{Algo:PSLE}) involves using \af{TITAN} on each entry of matrices, $A_i^T A_i$ and $A_i^T b_i$. As a consequence of Theorem~\ref{Th:Correctness}, we know that provided $\Gfrak$ is strongly connected and parameter $T \geq \delta(\Gfrak)$, we get accurate estimate of $\sum_{i=1}^m A_i^T A_i$ and $\sum_{i=1}^m A_i^T b_i$ in finite time. Using \eqref{Eq:Solution}, we compute solution $x^*$, and as a result we have solved \eqref{Eq:ProblemI} accurately in finite time. From Theorem~\ref{Th:Privacy}, provided $\kappa(\bar{\Gfrak}) \geq \tau+1$, \af{TITAN} preserves the statistical privacy of local inputs $(A_i^T A_i,A_i^T b_i)$, equivalently preserves the statistical privacy of $(A_i,b_i)$, against any adversary that corrupts $\Acal$ subject to $|\Acal|\leq \tau$. 

\vspace{0.5em}
\noindent {\em Comparison with Relevant Literature: }%
Liu {\em et al.} propose a privacy mechanism, an alternative to the obfuscation mechanism in \af{TITAN}, and augment it to gossip algorithms for privately solving average consensus. This private average consensus is used along with direct method \cite{mou2015distributed} to arrive at a private linear system solver. However, it requires agents to reach complete consensus between successive direct projection based steps. This increases the number of iterations needed to solve the problem and significantly increases communication costs as the underlying method \cite{mou2015distributed} is only linearly convergent. Distributed Recovery phase in \af{TITAN} also requires complete consensus, but we do it in finite time using \af{Top-k} primitive, and it is only performed once.

Authors in \cite{yang2020distributed} propose a finite-time solver for solving \eqref{Eq:ProblemI}. However, under this protocol an arbitrarily chosen node/agent observes states for all nodes, and the observations are used compute the exact solution. This algorithm was not designed to protect privacy of local information and consequently leads to large privacy violations by the arbitrarily chosen node. Algorithm~\ref{Algo:PSLE} solves \eqref{Eq:ProblemI} in finite time, while additionally protecting statistical privacy of local equations. 
Moreover, the algorithm in \cite{yang2020distributed} is computationally expensive -- requiring matrix singularity checks and kernel space computation. In comparison, our algorithm is inexpensive and the most expensive step is matrix inversion in \eqref{Eq:Solution}, that needs to be performed only once. 

Direct methods \cite{mou2015distributed,liu2017exponential}, constrained consensus applied to linear systems \cite{nedic2010constrained} and distributed optimization methods applied to linear regression \cite{shi2015extra,nedic2017achieving,sun2019convergence} are only linearly convergent, while, we provide superior finite-time convergence guarantee. 

Finite-time convergence of underlying consensus is critical for statistically private mechanisms such as the one in \af{TITAN} and the algorithms from \cite{gupta2018information,gupta2019statistical}. These mechanisms rely on modulo arithmetic, if we add them to a linearly convergent solver which outputs in-exact average/aggregate, then performing modulo operation over the in-exact output, in final step,  may arbitrarily amplify errors.

\vspace{0.5em}
\noindent{\em Improving Computational Efficiency: }In our approach, each node needs to compute an inverse, $\left( \sum_{i=1}^m A_i^T A_i \right)^{-1}$, which takes $\Ocal(n^3)$ computations. Note, this inverse is not performed on private data. We can reduce computational cost by allowing one node to perform the inversion followed by transmitting the solution to all nodes either by flooding protocol or sending it over a spanning-tree of $\Gfrak$.

%!TEX root = root.tex

\section{Numerical Experiments}
In this section, we perform two numerical experiments to validate Algorithm~\ref{Algo:PSLE}. First, we conduct a simple simulation over the problem defined in Fig.~\ref{fig:1} and show that update matrices $A_i^T A_i$ observed by the adversary appear to be random (Lemma~\ref{Lem:Randomness}). Second, we run a large scale experiment with synthetic data, with $m = 100$, $p=10000$ and $n=100$.

We have $m=5$ nodes, with $p=15$ linear equations in $n=5$ variables being stored on the 5 nodes (3 each) as shown in Fig.~\ref{fig:1}. We generated $A$ and $b$ matrices by drawing their entries from a Gaussian distribution (mean = 0 and variance = 2) and verified $A^TA$ is full rank. We executed Algorithm~\ref{Algo:PSLE} at each node. We select parameter $T = m \geq \delta(\Gfrak) = 4$, $k=m=5$, and $a = 20$. The algorithm solves the problem exactly in $T = 6$ iterations. Note $\Gfrak$ is strongly connected and has a weak vertex-connectivity of 2. Consequently, both accuracy and statistical privacy are guaranteed by Theorems~\ref{Th:Correctness}, \ref{Th:Privacy}. The perturbed update matrices generated after obfuscation step in \af{TITAN} are received by adversary node $1$ and shown in Fig.~\ref{fig:2}. The color of each entry in the matrix represents its numerical value and Fig.~\ref{fig:2} shows that the perturbed updates are starkly different from private updates and appear random. 

\begin{figure}[!t]
    \centering
    \includegraphics[width=0.85\linewidth]{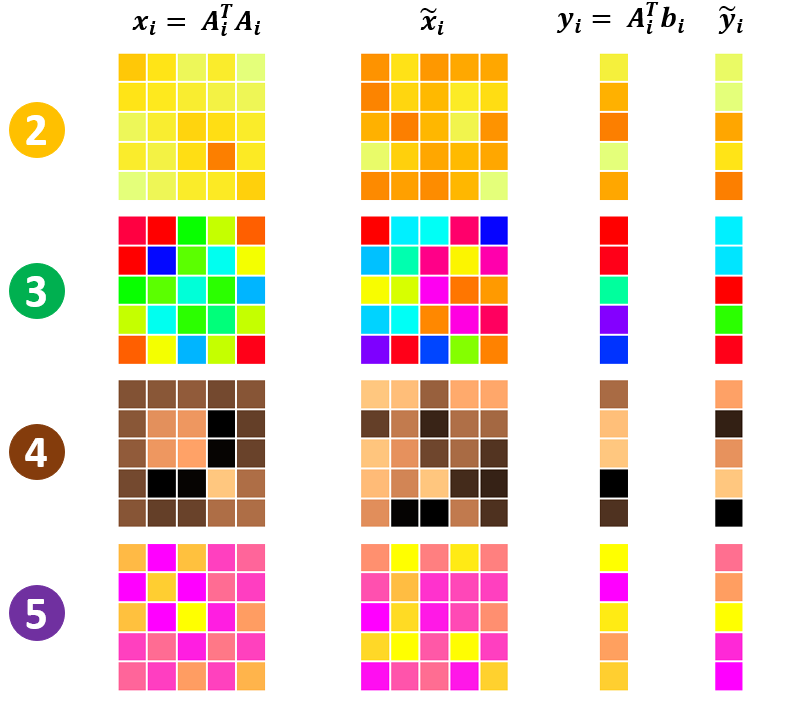}  
    \caption{Inputs $x_i = A_i^TA_i$ and $y_i = A_i^T b_i$ for $i\in\Vcal$ and perturbed inputs $\Tilde{x}_i$ and $\Tilde{y}_i$. The perturbed inputs appear to be random and very different from the private inputs.}
    \label{fig:2}
\end{figure}

Consider a large scale system with $m=100$ agents and $p=10000$ equations in $n=100$ variables that are horizontally partitioned for each agent to have $p_i = 100$ equations ($\forall i \in\Vcal$). The coefficients of the linear equations are synthetically generated via a Gaussian process and admit a unique least squares solution. The graph $\Gfrak$ is a directed ring with graph diameter $\delta(\Gfrak) = m-1=99$ and $\kappa(\Bar{\Gfrak}) = 2$. We run Algorithm~\ref{Algo:PSLE} with parameters $T = m \geq \delta(\Gfrak)$ and $k = 10$. We consider an honest-but-curious adversary that corrupts at most one agent and from $\kappa(\Bar{\Gfrak}) =2$ we guarantee statistical privacy of local data $(A_i,b_i)$. The algorithm converges to the solution in $1000$ iterations.

%!TEX root = root.tex

\section{Analysis and Proofs} \label{Sec:Proofs}

\subsection{Convergence Analysis}
\noindent We first prove correctness of \af{Top-k} consensus primitive.
\begin{proof}[Proof of Theorem~\ref{Th:TopKcorrectness}] In the \af{Top-k} consensus algorithm, each agent/node tries to keep track of the largest $k$ inputs observed till then. As $T\geq \delta{(\Gfrak)}$, each one of the $k$ largest inputs, i.e., $x_{(1)}, \ldots, x_{(k)}$, reaches each node in the network. Consequently, each nodes' local states $L_i$ and $\ell_i$ converge to the $k$ largest inputs in the network and associated id's. 
\end{proof}
\noindent Next, we prove correctness of \af{TITAN} in finite-time.
\begin{proof} [Proof of Theorem~\ref{Th:Correctness}] The correctness result in Theorem~\ref{Th:Correctness} follows from two key statements: (1) \af{TITAN} output is exactly equal to $\bar{x} = (1/m)\sum_{i=1}^m x_i$, and (2) \af{TITAN} converges in finite time. We prove both the statements above.

\vspace{0.5em}

\noindent \textbf{(1)} Recall that \af{TITAN} outputs $(1/m)\mod(\sum_i \Tilde{x}_{i=1}^m, ma)$. We use properties of modulo function to get,
\begin{align}
    &\mod(\sum_i \tilde{x}_{i=1}^m, ma) = \mod(\sum_{i=1}^m \mod(x_i+t_i, ma), ma) \nonumber \\
    &\stackrel{(a)}{=} \mod(\sum_{i=1}^m (x_i+t_i), ma) \stackrel{(b)}{=}\mod(\sum_{i=1}^m x_i, ma) = \sum_{i=1}^m x_i. \label{Eq:T2Proof1}
\end{align}
Recall, (a) follows from Remark~\ref{Rem:PropModulo}, (b) follows from $\sum_{i=1}^m t_i = 0$, and final equality follows from $x_i \in [0,a)$, $\sum_{i=1}^m x_i \in [0,ma)$ and Definition~\ref{Def:Modulo}. We have proved \af{TITAN} computes the exact aggregate and consequently the average.

\vspace{0.5em}

\noindent \textbf{(2)}
The \af{Top-k} protocol involves $T=\tilde{\delta} \geq \delta(\Gfrak)$ iterations where nodes share the largest $k$ values that they have encountered. From Theorem~\ref{Th:TopKcorrectness}, \af{Top-k} converges to the largest $k$ perturbed inputs. We need to run $\lceil m/k\rceil$ iterations of \af{Top-k} consensus. Consequently, the total iterations for complete execution is $T \ceil{m/k}$.
\end{proof}

\subsection{Privacy Analysis}
The privacy analysis presented here is similar in structure to \cite{gupta2018information,gupta2019statistical}. The key difference lies in the graph condition required for privacy. Recall $\bar{\Gfrak}$ is augmented $\Gfrak$. Specifically, $\bar{\Gfrak}$ has the same vertex set $\Vcal$ but the edge set $\bar{\Ecal}$ is obtained by taking all edges in $\Ecal$ and augmenting it with reversed edges. 
Recall, the noise shared on edge $e = (i,j) \in \Ecal$, is denoted as $r_{ij}$ and it is uniformly distributed over $[0,ma)$. Perturbations $t_i$ constructed using \eqref{Eq:Mask} can be written as $\mbf{t} =\mod(B \mbf{r}, ma)$, where $B$ is the incidence matrix of $\Gfrak$ and $\mbf{r}$ is the vector of $r_{ij}$ ordered according to the edge ordering in columns of $B$. 
If $\Gfrak$ is strongly connected then $\Gfrak$ is weakly connected and $\bar{\Gfrak}$ is connected. 
\begin{lemma}
If $\bar{\Gfrak}$ is connected then $\mbf{t}$ is uniformly distributed over all points in $[0,ma)^m$ subject to the constraint $\mod(\sum_{i=1}^m t_i, ma) = 0$.\label{Lem:Mask}
\end{lemma}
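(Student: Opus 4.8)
The plan is to view the obfuscation step as sampling a Haar-uniform vector on a torus and pushing it forward through the incidence matrix. Identify the interval $[0,ma)$ with the circle group $\RR/ma\ZZ$ via $\mod(\cdot,ma)$; then each $r_{ij}\sim\Ucal[0,ma)$ is the Haar-uniform law on $\RR/ma\ZZ$, and since the $r_{ij}$ are mutually independent, $\mbf r=(r_{ij})$ is Haar-uniform on $G:=(\RR/ma\ZZ)^{|\Ecal|}$. Because $B$ has integer entries, reduction mod $ma$ makes $v\mapsto Bv\bmod ma$ a well-defined continuous group homomorphism $\phi\colon G\to(\RR/ma\ZZ)^m$, and by \eqref{Eq:Mask} we have exactly $\mbf t=\phi(\mbf r)$. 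Moreover every column of an oriented incidence matrix contains one $+1$ and one $-1$, so $\mbf 1^\T B=0$; hence $\mod(\sum_i t_i,ma)=0$ almost surely, which already shows $\mbf t$ is supported on the claimed constraint set.

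Next I would invoke the standard fact that the pushforward of Haar measure under a continuous homomorphism of compact abelian groups is the Haar measure of the image. Concretely, $\phi_\ast\lambda_G$ is a Borel probability measure supported on the compact subgroup $H:=\phi(G)\le(\RR/ma\ZZ)^m$, and for $h=\phi(g_0)\in H$ one has $\phi^{-1}(h+A)=g_0+\phi^{-1}(A)$, so translation invariance of $\lambda_G$ forces $\phi_\ast\lambda_G$ to be translation invariant on $H$; by uniqueness of Haar measure it is the normalized Haar measure of $H$. Thus $\mbf t$ is uniform on $H$, and it remains only to identify $H$.

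To identify $H$, first compute the real image of $B$: connectedness of $\bar{\Gfrak}$ means the underlying undirected graph is connected, so $\rank B=m-1$, and since $\mathrm{Im}_\RR B\subseteq\{v\in\RR^m:\mbf 1^\T v=0\}$ with matching dimension, in fact $\mathrm{Im}_\RR B=\{v\in\RR^m:\sum_i v_i=0\}$. Therefore $H=(\mathrm{Im}_\RR B+ma\ZZ^m)/ma\ZZ^m$. The inclusion $H\subseteq\{v:\sum_i v_i\equiv 0\pmod{ma}\}$ is immediate; for the reverse, take any representative $v\in[0,ma)^m$ with $\sum_i v_i=k\,ma$ for an integer $k\ge 0$, and observe that $v-k\,ma\,e_1\in\mathrm{Im}_\RR B$ is congruent to $v$ modulo $ma\ZZ^m$. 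Hence $H$, written out in $[0,ma)^m$, is exactly $\{v\in[0,ma)^m:\mod(\sum_i v_i,ma)=0\}$, and "uniform on $H$" means the normalized $(m-1)$-dimensional Haar measure on that union of hyperplane slices; combined with the previous paragraph this proves the lemma.

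The only genuinely delicate point is the mod-$ma$ bookkeeping in the last paragraph: one must verify that reducing the integer matrix $B$ does not shrink the image to a proper subset of the codimension-one constraint set (i.e.\ there is no residual integrality obstruction), which is precisely where connectedness is used. If one prefers to avoid the abstract pushforward argument, the same conclusion follows concretely: fix a spanning tree of $\bar{\Gfrak}$, note that its reduced vertex--edge incidence submatrix (delete row $m$) is unimodular and hence induces an automorphism of $(\RR/ma\ZZ)^{m-1}$; therefore the tree-edge noises alone already make $(t_1,\dots,t_{m-1})$ uniform on $(\RR/ma\ZZ)^{m-1}$ (the remaining noises contribute only an independent translation), while $t_m=\mod(-\sum_{i<m}t_i,ma)$ is pinned down by $\mbf 1^\T B=0$; unfolding this description of $\mbf t$ yields exactly the stated distribution.
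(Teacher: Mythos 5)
Your argument is correct, and it reaches the lemma by a genuinely different (and more detailed) route than the paper. The paper's proof stays elementary: it writes $\mbf{t}=\mod(B\mbf{r},ma)$, notes that connectivity of $\bar{\Gfrak}$ guarantees every $t_i$ is a mod-$ma$ combination containing at least one independent $\Ucal[0,ma)$ noise, invokes the fact that adding an independent uniform term and reducing modulo $ma$ yields a uniform result to get $t_i\sim\Ucal[0,ma)$ for each $i$, and then combines this with the aggregate identity \eqref{Eq:PerturbationsBalanced} to state the lemma. What that sketch leaves implicit is exactly the point you work out: marginal uniformity of the coordinates together with the sum constraint does not by itself determine the joint law, and the lemma (as it is used in the proof of Theorem~\ref{Th:Privacy}) is a statement about the joint distribution. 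Your Haar-pushforward argument --- $\mbf{r}$ Haar-uniform on the torus, $\phi(\mbf{v})=\mod(B\mbf{v},ma)$ a continuous homomorphism, and the image subgroup identified with the constraint set because $\bone^T B=0$ and $\rank B=m-1$ under connectedness, with your $v-k\,ma\,e_1$ step ruling out any integrality obstruction --- supplies precisely that joint statement, and your spanning-tree/unimodularity variant gives the same conclusion in concrete terms while also exhibiting the conditional form (uniformity of $(t_1,\dots,t_{m-1})$ with $t_m$ pinned by the constraint, given independent translations from the remaining edges) that the privacy proof later applies to the honest subgraph $\Gfrak_\Hcal$. In short: the paper's proof buys brevity; yours buys a rigorous identification of the joint distribution and makes explicit where connectivity of $\bar{\Gfrak}$ is actually used.
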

\begin{proof} 
Recall that perturbation vector $\mbf{t} = [t_i]$ can be written as $\mbf{t} = \mod(\sum_{j=1}^{|\Ecal|} B r_{j}, ma)$, where the modulo operation is performed element-wise. 

The connectivity of $\bar{\Gfrak}$ ensures that each $t_i$ is a linear combination of uniform random perturbations ($r_j$'s). We use the fact that $\mod(a+b)$ is uniformly distributed if either $a$ or $b$ is uniformly distributed. Using above statements along with $r_j \sim \Ucal[0,ma)$ we get $t_i \sim \Ucal[0,ma)$. $\mathbf{t}$ is uniformly distributed over $[0,ma)^m$. Moreover, from \eqref{Eq:PerturbationsBalanced}, we know, $\mod(1^T \mathbf{t}, ma) =  0$, is guaranteed if $\mbf{t} = \mod(B \mbf{r}, ma)$. This completes the proof of Lemma~\ref{Lem:Mask}.
\end{proof}

Using the above property of perturbations, $\mbf{t}$, we can show that perturbed inputs appear to be uniformly random, as described in the next lemma.
\begin{lemma}
If $\bar{\Gfrak}$ is connected then the effective inputs $\tilde{x}_i$ are uniformly distributed over $[0,ma)$ subject to the constraint $\mod(\sum_{i=1}^m \tilde{x}_i, ma) = \sum_i x_i$. \label{Lem:Randomness}
\end{lemma}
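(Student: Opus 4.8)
The plan is to derive this directly from Lemma~\ref{Lem:Mask} together with the modulo aggregate invariance property~\eqref{Eq:PerturbationsBalanced}, exploiting the fact that adding a deterministic shift to a uniform random variable (followed by a modulo reduction) yields another uniform random variable on the same interval. Concretely, for each node $i$ the perturbed input is $\tilde{x}_i = \mod(x_i + t_i, ma)$, so $\tilde{x}_i$ is a deterministic (invertible, for fixed $x_i$) function of $t_i$ on the torus $[0,ma)$; the map $t_i \mapsto \mod(x_i + t_i, ma)$ is a bijection of $[0,ma)$ onto itself that preserves Lebesgue measure.

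First I would recall from Lemma~\ref{Lem:Mask} that, since $\bar{\Gfrak}$ is connected, $\mbf{t} = (t_1,\ldots,t_m)$ is uniformly distributed on the set $S = \{ \v{s} \in [0,ma)^m : \mod(\sum_{i=1}^m s_i, ma) = 0 \}$. Then I would define the shift map $\Phi : [0,ma)^m \to [0,ma)^m$ by $\Phi(\v{s}) = (\mod(s_1 + x_1, ma), \ldots, \mod(s_m + x_m, ma))$, so that $\tilde{\v{x}} = \Phi(\mbf{t})$. The key observations are: (i) $\Phi$ is a bijection of $[0,ma)^m$ that preserves the uniform (Lebesgue) measure, since it is a coordinatewise translation on the torus $(\mathbb{R}/ma\mathbb{Z})^m$; and (ii) using property~1 of Remark~\ref{Rem:PropModulo}, $\Phi$ maps the constraint surface $S$ onto the surface $S' = \{ \v{s} \in [0,ma)^m : \mod(\sum_{i=1}^m s_i, ma) = \mod(\sum_i x_i, ma) = \sum_i x_i \}$, where the last equality uses $x_i \in [0,a)$ so $\sum_i x_i \in [0,ma)$. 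Pushing the uniform distribution on $S$ through the measure-preserving bijection $\Phi$ therefore gives the uniform distribution on $S'$, which is exactly the claim: $\tilde{x}_i$ are uniform on $[0,ma)$ subject to $\mod(\sum_i \tilde{x}_i, ma) = \sum_i x_i$.

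I would present this in two short steps. Step~1: establish that $\Phi$ restricted to $S$ is a measure-preserving bijection onto $S'$ — the bijectivity and measure-preservation follow because each coordinate map $s_i \mapsto \mod(s_i + x_i, ma)$ is a rotation of the circle $[0,ma)$, and the constraint transforms correctly by Remark~\ref{Rem:PropModulo}(1) applied to $\mod(\sum_i (t_i + x_i), ma) = \mod(\mod(\sum_i t_i, ma) + \mod(\sum_i x_i, ma), ma)$ together with~\eqref{Eq:PerturbationsBalanced}. Step~2: conclude by the standard fact that the image of a uniform distribution under a measure-preserving bijection is uniform on the image set.

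The only subtle point — and the one I would be most careful about — is Step~1, specifically verifying that $\Phi$ genuinely maps $S$ \emph{onto} (not merely into) $S'$ and is injective there, i.e.\ that restricting the torus-translation to these codimension-one constraint surfaces does not distort the induced $(m-1)$-dimensional measure. This is really just the observation that translation on $(\mathbb{R}/ma\mathbb{Z})^m$ commutes with the quotient map to $(\mathbb{R}/ma\mathbb{Z})$ given by summing coordinates, so it carries the fiber over $0$ bijectively and isometrically onto the fiber over $\sum_i x_i$; I expect this to be the main (though still routine) obstacle, and everything else is bookkeeping with Remark~\ref{Rem:PropModulo}.
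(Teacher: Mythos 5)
Your proof is correct and is essentially the paper's own argument: the paper writes the conditional density as $f_{\tilde{X}}(\tilde{x}\mid X=x)=f_{T}\big(\mod(\tilde{x}-x,ma)\big)$ and invokes Lemma~\ref{Lem:Mask} to conclude it is constant on the constraint set, which is exactly your measure-preserving torus-translation pushforward read in the inverse direction. Your explicit check that the translation carries the sum-fiber over $0$ bijectively onto the fiber over $\sum_i x_i$ is, if anything, more careful than the paper about the induced measure on the constraint surface.
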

\begin{proof} 
Let $\Tilde{X}, X$ and $T$ represent the random vectors of agents obfuscated inputs, private inputs and perturbations respectively. Let $f_{\Tilde{X}}, f_{X}$ and $f_T$ denote the probability distribution of the respective random variables.

\noindent Recall $\Tilde{x}_i = \mod(x_i + t_i)$ and the fact that $t_i$ and $x_i$ are independent. We have,
\begin{align*}
    f_{\Tilde{X}}(\Tilde{x}|X=x) = f_{T}(T=\mod(\Tilde{x}-x))
\end{align*}
As $\bar{\Gfrak}$ is connected and $\mod(\sum_{i=1}^m \Tilde{x}_i) = \sum_{i=1}^m x_i$ from \eqref{Eq:T2Proof1}, we know that $\mod(\Tilde{x} - x)$ is uniformly distributed over $[0,ma)^m$. And we have $f_{T}(T=\mod(\Tilde{x}-x)) = $ constant for any $\Tilde{x} \in [0,ma)^m$, given $\mod(1^T\Tilde{x}) = 1^T x$.

\noindent We have that $f_{\Tilde{X}}(\Tilde{x}| X = x) = $ constant for all $\Tilde{x} \in [0,ma)^m$ implying that the perturbed input appears to be uniformly distributed over $[0,ma)^m$ subject to $\mod(1^T\Tilde{x}) = 1^T x$. 
\end{proof}

Recall, $\Acal$ is the set of honest-but-curious adversaries with $|\Acal| \leq \tau$. Also recall, adversarial nodes observe/store all information directly received and transmitted.

\begin{proof}[Proof of Theorem~\ref{Th:Privacy}]
Let $\Hcal = \Vcal \setminus \Acal$ denote the set of honest nodes. Let $\Gfrak_\Hcal = (\Hcal, \Ecal_\Hcal)$ denote the subgraph induced by honest nodes, implying, $\Ecal_\Hcal \subseteq \Ecal$ is the set of all edges from $\Gfrak$ that are incident on \& from two honest nodes. Let $B_\Hcal$ denote the oriented incidence matrix of graph $\Gfrak_\Hcal$. 

We know that vertex connectivity $\kappa(\bar{\Gfrak}) \geq \tau+1$, implying that deleting any $\tau$ nodes from $\bar{\Gfrak}$ does not disconnect it. Implying, $\bar{\Gfrak}_\Hcal$ is connected. 

The information accessible to an adversary, defined as \af{View}$_\Acal$, consists of the private inputs of corrupted agents, perturbed inputs of honest agents and the random numbers transmitted or received by corrupted nodes.
\begin{align*}
    \af{View}_{\Acal}(x) = \{\{\tilde{x}_i | i \in \Hcal\}, \{x_i | i \in \Acal\}, \{r_{ij} | i \in \Acal \text{ or } j \in \Acal\}\}
\end{align*}
For privacy, we prove that the probability distributions of $\af{View}_\Acal(x) = \af{View}_\Acal(x')$ for any two inputs $x, x'$ such that $x_i = x'_i$ for all $i \in \Acal$ and $\sum_{i \in \Vcal}x_i = \sum_{i \in \Vcal} x'_i$.

Let the incidence matrix be partitioned as $B = [B_\Hcal \; B_\Acal]$, where $B_\Hcal$ are columns of $B$ corresponding to edges in $\Ecal_\Hcal$ and $B_\Acal$ are columns of $B$ corresponding to edges in $\Ecal_\Acal = \Ecal \setminus \Ecal_\Hcal$. Let $B_e$ represent the $e^{th}$ column of $B$ and $B_{i,e}$ represent the $[i,e]^{th}$ entry of matrix $B$. $\mbf{r}_e$ denote the $e^{th}$ entry of vector $\mbf{r}$. Note that the perturbations can be expressed as, $t_i = \mod \left(\sum_{e \in \Ecal_\Hcal \cup \Ecal_\Acal} B_{i,e} \mbf{r}_e, ma \right)$ and equivalently as $t_i = \mod(t_i^{\Hcal} + t_i^{\Acal},ma)$, where, $t_i^\Hcal = \mod \left(\sum_{e \in \Ecal_\Hcal} B_{i,e} \mbf{r}_{e},ma \right)$ and $t_i^\Acal = \mod \left(\sum_{e \in \Ecal_\Acal}B_{i,e} \mbf{r}_{e},ma\right)$, following Remark~\ref{Rem:PropModulo}.

Using Lemma~\ref{Lem:Mask} we can state the following. The values $\{t^\Hcal_i\}_{i\in\Hcal} = \{\mod(\sum_{e \in \Ecal_\Hcal}B_{i,e} \mbf{r}_{e},ma)\}_{i \in \Hcal}$ lies in the span of columns of $B_{\Hcal}$ and is uniformly distributed over $[0,ma)^{|\Hcal|}$ subject to $\sum_{i \in \Hcal} \mod(\sum_{e \in \Ecal_\Hcal}B_{i,e} \mbf{r}_{e},ma) = 0$ given that  $\bar{\Gfrak}_\Hcal$ is connected. %$\Gfrak_\Hcal$ is weakly connected or%of $B_{*,e}$ or columns 
Consequently, the masks $\{t_i\}_{i\in\Hcal}$ are uniformly distributed over $[0,ma)^{|\Hcal|}$ subject to $\mod(\sum_{i \in \Hcal} t_i,ma) = -\mod(\sum_{i \in \Acal} t_i, ma)$, given $\bar{\Gfrak}_\Hcal$ is connected, and given perturbations $\{\mbf{r}_e\}_{e\in\Ecal_\Acal}$. 

Recall, $\mbf{r}_{e \in \Ecal_\Hcal}$ are uniformly and independently distributed in $[0,ma)$ given values of $\{\mbf{r}_{e}\}_{e \in \Ecal_\Acal}$ and $t_i$ for each $i \in \Acal$. 

We use the fact that $\bar{\Gfrak}_\Hcal$ is connected and  Lemma~\ref{Lem:Randomness}, we get that, $\tilde{x}_i$ are uniformly distributed over $[0,ma)$ subject to $\mod(\sum_{i}\tilde{x}_i, ma) = \sum_i x_i$. Thus, if $X_\Hcal$, $\tilde{X}_\Hcal$ are random variables representing $\{x_i\}_{i \in \Hcal}$ and $\{\tilde{x}_i\}_{i\in\Hcal}$, then,
\begin{align*}
    f_{\tilde{X}_\Hcal}(\tilde{x}_\Hcal|x_\Hcal, \{\mbf{r}_e\}_{e \in \Ecal_\Acal}) = \text{constant}
\end{align*}
for all $\tilde{x}_\Hcal$ in $[0,ma)^{|\Hcal|}$ that satisfy, $\mod(\sum_{i\in \Hcal}\tilde{x}_i, ma) = \mod (\sum_{i\in\Vcal}x_i - \sum_{i \in\Acal}\tilde{x}_i, ma)$.

We combine this with the fact that the perturbations $\mbf{r}_e$ are independent of inputs $x$, to get,
\begin{align*}
    f_{\af{View}_\Acal(x)}(\{\tilde{x}_\Hcal, \{\mbf{r}_e\}_{e \in\Ecal_\Acal}\}) = f_{\af{View}_\Acal(x')} (\{\tilde{x}_\Hcal, \{\mbf{r}_e\}_{e \in\Ecal_\Acal}\})
\end{align*}
$\forall x, x'$ such that, $x_i = x'_i$ $\forall i \in \Acal$ and $\sum_i x_i = \sum_i x'_i$.
\end{proof}

\section{Conclusion}
We presented \af{TITAN}, a finite-time, private algorithm for solving distributed average consensus. We show that \af{TITAN} converges to the average in finite-time that is dependent only on graph diameter and number of agents/nodes. It also protects statistical privacy of inputs against an honest-but-curious adversary that corrupts at most $\tau$ nodes in the network, provided weak vertex-connectivity of graph is at least $\tau+1$. We use \af{TITAN} to solve a horizontally partitioned system of linear equations in finite-time, while, protecting statistical privacy of local equations against an honest-but-curious adversary.

\bibliographystyle{IEEEtran}
\bibliography{Central}

\end{document}